\newlength{\algorithmwidth}
\newcommand{\norm}[1]{\Vert #1 \Vert}
\newcommand{\normtwo}[1]{\Vert #1 \Vert_2}
\newcommand{\normone}[1]{\Vert #1 \Vert_1}
\newcommand{\gr}[1]{( #1 )}
\newcommand{\Gr}[1]{\big( #1 \big)}
\newcommand{\GR}[1]{\Big( #1 \Big)}
\newcommand{\GRg}[1]{\bigg( #1 \bigg)}
\newcommand{\C}{\mathbb C}
\newcommand{\eps}{\varepsilon}
\newcommand{\bs}[1]{#1}
\newcommand{\e}{\mathrm{e}} 
\newcommand{\cs}{compressed sensing}
\newcommand{\wrt}{with respect to}
\theoremstyle{plain}
\newtheorem{theorem}{Theorem}[section]
\newtheorem{corollary}[theorem]{Corollary}
\newtheorem{lemma}[theorem]{Lemma}
\theoremstyle{definition}
\newtheorem{definition}[theorem]{Definition}
\theoremstyle{remark}
\numberwithin{equation}{section}
\newcommand{\enorm}[1]{\norm{#1}_2}
\newcommand{\pnorm}[2]{\norm{#2}_{#1}}
\newcommand{\defby}{\overset{\mathrm{\scriptscriptstyle{def}}}{=}}
\def \C {\mathbb{C}}
\def \e {\varepsilon}
\def \eps {\varepsilon}
\def \< {\langle}
\def \> {\rangle}
\def \^ {\widehat}
\def \supp {{\rm supp}}
\newcommand{\bigO}{\mathrm{O}}
    \newcommand{\vct}{}
\begin{document}
\bibliographystyle{plain}
\title[]{Mixed Operators in Compressed Sensing}

\author{Matthew A. Herman \and Deanna Needell}
\thanks{D.N.~is with the Dept.~of Statistics, Stanford University, 390 Serra Mall, Stanford CA 94305, USA. e-mail:
\texttt{dneedell@stanford.edu}.} %
\thanks{M.H.~is with the Dept.~of Mathematics, University of California, Los Angeles, 520 Portola Plaza, Los Angeles, CA 90095, USA. e-mail:
\texttt{mattyh@math.ucla.edu}.}
\thanks{M.H. is partially supported by NSF Grant No. DMS-0811169, NSF VIGRE Grant No. DMS-0636297, and a grant from the DoD at UCLA. D.N. is partially supported by the NSF DMS EMSW21-VIGRE grant}
\begin{abstract}
Applications of compressed sensing motivate the possibility of using
different operators to encode and decode a signal of interest. Since
it is clear that the operators cannot be too different, we can view
the discrepancy between the two matrices as a perturbation. The
stability of $\ell_1$-minimization and greedy algorithms to recover
the signal in the presence of \emph{additive noise} is by now
well-known. Recently however, work has been done to analyze these
methods with noise in the measurement matrix, which generates a
\emph{multiplicative noise} term. This new framework of generalized
perturbations (i.e., both additive and multiplicative noise) extends
the prior work on stable signal recovery from incomplete and
inaccurate measurements of Cand\`es, Romberg and Tao using Basis
Pursuit (BP), and of Needell and Tropp using Compressive Sampling
Matching Pursuit (CoSaMP). We show, under reasonable assumptions,
that the stability of the reconstructed signal by both BP and CoSaMP
is limited by the noise level in the observation. Our analysis
extends easily to arbitrary greedy methods.
\end{abstract}
\subjclass{68W20, 65T50, 41A46}
\maketitle

\section{Introduction}


Compressed sensing refers to the problem of realizing a sparse, or
nearly sparse, signal from a small set of linear measurements. There
are many applications of compressed sensing in engineering and the
the sciences. Examples include biomedical imaging, x-ray
crystallography, audio source separation, seismic exploration, radar
and remote sensing, telecommunications, distributed and multi-sensor
networks, machine learning, robotics and control, astronomy, surface
metrology, coded aperture imaging, biosensing of DNA, and many more.
See~\cite{CSwebpage} for an extensive list of the latest literature.

To precisely formulate the problem, we define an $s$-sparse signal
$x\in\C^d$ to be one with~$s$ or fewer non-zero components,
$$
\|x\|_0 \defby |\supp(x)| \leq s \ll d.
$$
We apply a matrix $A\in\C^{m\times d}$ to the signal and acquire
measurements 
$b = Ax$. Often, we encounter additive noise so that the
measurements become $y = b + e = Ax + e$, where~$e$ is an error or
noise term usually assumed to have bounded energy $\normtwo{e} \le
\epsilon$. The field of \cs\ has provided many recovery algorithms
for sparse and nearly sparse signals, most with strong theoretical
and numerical results.

One major approach to sparse recovery is $\ell_1$-minimization or
Basis Pursuit~\cite{DS89:Uncertainty-Principles,CT05:Decoding}. This
method simply solves an optimization problem to recover the signal
$x$,
\begin{equation}\label{L1}
\min_z \|z\|_1 \quad \text{such that} \quad \|A z - y\|_2 \leq
\epsilon.
\end{equation}
This problem can be solved using convex optimization techniques and
is thus computationally feasible. Cand\`es and Tao show
in~\cite{CT05:Decoding} that if the signal $x$ is sparse and the
measurement matrix $A$ satisfies a certain quantitative property,
then~\eqref{L1} recovers the signal $x$ exactly.

\begin{definition}
A measurement matrix $A$ satisfies the \emph{restricted isometry
property} (RIP) with parameters $(s,\delta)$ if for every $s$-sparse
vector $x$, we have
$$
(1-\delta)\|x\|_2^2 \leq \|A x\|_2^2 \leq (1+\delta)\|x\|_2^2.
$$
The parameter $\delta$ is also referred to as the \emph{restricted
isometry constant} (RIC) of matrix $A$.
\end{definition}

It is now well known that many $m \times d$ matrices (e.g., random
Gaussian, Bernoulli, and partial Fourier) satisfy the RIP with
parameters $(s, \delta)$ when $m = \bigO(s\log d)$,
see~\cite{MPJ06:Uniform,RV08:sparse} for details. It has been shown
in ~\cite{CT05:Decoding,CRT06:Stable} that if $A $ satisfies the RIP
with parameters $(3s, 0.2)$, then~\eqref{L1} recovers a signal
$x^\star$ that satisfies
\begin{equation}\label{bound}
\|x^\star - x\|_2 \leq C_0\frac{\|x-x_s\|_1}{\sqrt{s}} + C_1\epsilon
\end{equation}
where $x_s$ denotes the vector consisting of the $s$ largest
components of $x$ in magnitude.

In~\cite{Can08:Restricted-Isometry} Cand\`es sharpened this bound to
work for matrices satisfying the RIP with parameters $(2s,
\sqrt{2}-1)$, and later Foucart and Lai sharpened it to work for
$(2s, 0.4531)$ in~\cite{FL08:Sparsest}.

Although the recovery guarantees provided by $\ell_1$-minimization
are strong, it requires methods of convex optimization which,
although often quite efficient in practice, have a polynomial
runtime. For this reason, much work in compressed sensing has been
done to find faster methods. Many of these algorithms are greedy,
and compute the (support of the) signal iteratively (see e.g.,
~\cite{TG07:Signal-Recovery,BD08:Iterative,DTDS06:Sparse-Solution,FR07:Iterative,NV07:Uniform-Uncertainty,DM08:Subspace-Pursuit}).
Our analysis in this work focuses on Needell and Tropp's Compressive
Sampling Matching Pursuit (CoSaMP)~\cite{NT08:Cosamp}. CoSaMP
provides a fast runtime while also providing strong guarantees
analogous to those of $\ell_1$-minimization.

The CoSaMP algorithm can be described as follows. We use the
notation $w|_T$ and $A_T$ to denote the vector $w$ restricted to
indices given by a set $T$, and the matrix $A$ restricted to the the
columns indexed by $T$, respectively. 

\bigskip
\textsc{Compressive Sampling Matching Pursuit (CoSaMP)}

\nopagebreak

\fbox{\parbox{\algorithmwidth}{ \textsc{Input:} Measurement matrix
$A$, measurement vector $\vct{y}$, sparsity level $s$

\textsc{Output:} $s$-sparse reconstructed vector $\hat{\vct{x}} =
\vct{a}$

  \textsc{Procedure:}

\begin{description}

\item[Initialize] Set $\vct{a}^0 = \vct{0}$, $\vct{v} = \vct{y}$,
$k = 0$. Repeat the following steps and\\
increment $k$ until the halting criterion is true.

\item[Signal Proxy] Set $\vct{u} = A^* \vct{v}$,
$\Omega = \supp{ (\vct{u}_{2s}) }$ and merge the supports:\\
$T = \Omega \cup \supp{ (\vct{a}^{k-1}) }$.

\item[Signal Estimation] Using least-squares, set $\vct{w}|_{T} =
A_T^\dagger \vct{y}$ and $\vct{w}|_{T^c} = \vct{0}$.

\item[Prune] To obtain the next approximation, set $\vct{a}^{k} =
\vct{w}_s$.

\item [Sample Update] Update the current samples: $\vct{v} =
\vct{y} - A \vct{a}^{k}$.

\end{description}
}}

\bigskip


In~\cite{NT08:Cosamp} it is shown that when the measurement matrix has a small RIC 
that CoSaMP approximately recovers arbitrary signals from noisy
measurements. This is summarized by the following.

\begin{theorem}[CoSaMP~\cite{NT08:Cosamp}] \label{thm:cosamp}
Suppose that $A$ is a measurement matrix with RIC
$\delta_{4s} \leq 0.1$. Let $\vct{y} = A \vct{x} + \vct{e}$ be a
vector of samples of an arbitrary signal $x$, contaminated with
arbitrary noise. Then the algorithm CoSaMP produces an $s$-sparse
approximation $x^\sharp$ that satisfies
$$
\enorm{ x^\sharp - \vct{x} } \leq {C} \cdot  \big( \enorm{ \vct{x} -
\vct{x}_s } + \frac{1}{\sqrt{s}}\pnorm{1}{\vct{x} - \vct{x}_s} +
\enorm{ \vct{e} } \big).
$$
\end{theorem}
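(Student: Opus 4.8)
The plan is to follow the original CoSaMP analysis: establish a one-step \emph{iteration invariant} showing that each pass of the main loop shrinks the approximation error by a fixed factor below $1$, plus an additive term controlled by the unrecoverable energy $\nu \defby \enorm{\vct{x}-\vct{x}_s} + \tfrac{1}{\sqrt{s}}\pnorm{1}{\vct{x}-\vct{x}_s} + \enorm{\vct{e}}$, and then iterate.

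First I would reduce to a genuinely $s$-sparse signal. Write $\vct{y} = A\vct{x}_s + \tilde{\vct{e}}$ with $\tilde{\vct{e}} \defby A(\vct{x}-\vct{x}_s) + \vct{e}$. Partitioning the tail $\vct{x}-\vct{x}_s$ into consecutive blocks of $s$ coordinates in decreasing order of magnitude and applying the upper RIP inequality blockwise bounds $\enorm{A(\vct{x}-\vct{x}_s)}$ by a constant times $\enorm{\vct{x}-\vct{x}_s} + \tfrac{1}{\sqrt{s}}\pnorm{1}{\vct{x}-\vct{x}_s}$, so that $\enorm{\tilde{\vct{e}}} \le C_\nu\,\nu$ with $C_\nu$ close to $1$. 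It then suffices to prove that, run on $\vct{y} = A\vct{x}_s + \tilde{\vct{e}}$, CoSaMP returns an $s$-sparse $\vct{x}^\sharp$ with $\enorm{\vct{x}^\sharp - \vct{x}_s} \le C'\enorm{\tilde{\vct{e}}}$, since one more triangle inequality then yields the stated estimate.

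Next, fix an iteration and follow the residual $\vct{r} \defby \vct{x}_s - \vct{a}^{k-1}$ through the four stages, using the standard consequences of $\delta_{4s} \le 0.1$ (every index set below has at most $4s$ elements, as $|\Omega| \le 2s$, $|\supp(\vct{a}^{k-1})| \le s$, $|\supp(\vct{x}_s)| \le s$): $\enorm{(A_S^*A_S - \Id)\vct{z}} \le \delta\enorm{\vct{z}}$, $\enorm{A_S^*\vct{z}} \le \sqrt{1+\delta}\,\enorm{\vct{z}}$, $\enorm{A_S^\dagger\vct{z}} \le (1-\delta)^{-1/2}\enorm{\vct{z}}$, and $\enorm{A_S^*A_R\vct{z}} \le \delta\enorm{\vct{z}}$ for $S\cap R=\emptyset$.
\begin{description}
\item[Identification] The proxy is $\vct{u} = A^*\vct{v} = A^*(A\vct{r}+\tilde{\vct{e}})$ and $\Omega = \supp(\vct{u}_{2s})$. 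Since $\vct{r}$ lives on at most $2s$ coordinates, comparing the proxy energy on $\supp(\vct{r})\setminus\Omega$ with that on $\Omega\setminus\supp(\vct{r})$ and using the RIP estimates yields $\enorm{\vct{r}|_{\Omega^c}} \le c_1\delta\,\enorm{\vct{r}} + c_2\enorm{\tilde{\vct{e}}}$.
\item[Support merger] Setting $T = \Omega \cup \supp(\vct{a}^{k-1})$ one has $\enorm{\vct{x}_s|_{T^c}} \le \enorm{\vct{r}|_{\Omega^c}}$, so the identification bound controls the portion of $\vct{x}_s$ missed by $T$, and $|T| \le 3s$.
\item[Estimation] For $\vct{w}|_T = A_T^\dagger\vct{y}$, expanding $\vct{y} = A_T\vct{x}_s|_T + A_{T^c}\vct{x}_s|_{T^c} + \tilde{\vct{e}}$ and using $A_T^\dagger A_T = \Id$ gives $\vct{x}_s|_T - \vct{w}|_T = -A_T^\dagger A_{T^c}\vct{x}_s|_{T^c} - A_T^\dagger\tilde{\vct{e}}$; the RIP bounds both terms, so $\enorm{\vct{x}_s - \vct{w}} \le c_3\enorm{\vct{x}_s|_{T^c}} + c_4\enorm{\tilde{\vct{e}}}$.
\item[Pruning] Since $\vct{a}^k = \vct{w}_s$ is a best $s$-term approximation of $\vct{w}$ and $\vct{x}_s$ is $s$-sparse, $\enorm{\vct{x}_s - \vct{a}^k} \le \enorm{\vct{x}_s - \vct{w}} + \enorm{\vct{w} - \vct{w}_s} \le 2\enorm{\vct{x}_s - \vct{w}}$.
\end{description}
Chaining these four inequalities and substituting the identification bound produces $\enorm{\vct{x}_s - \vct{a}^k} \le \rho\,\enorm{\vct{x}_s - \vct{a}^{k-1}} + c_5\enorm{\tilde{\vct{e}}}$ with $\rho < 1$; tracking the constants shows that $\delta_{4s} \le 0.1$ is exactly what forces $\rho \le \tfrac12$.

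Finally, unrolling from $\vct{a}^0 = \vct{0}$ gives $\enorm{\vct{x}_s - \vct{a}^k} \le \rho^k\enorm{\vct{x}_s} + \tfrac{c_5}{1-\rho}\enorm{\tilde{\vct{e}}}$, so after $O\!\big(\log(\enorm{\vct{x}_s}/\enorm{\tilde{\vct{e}}})\big)$ iterations, or once the halting criterion fires, the geometric term is dominated and $\enorm{\vct{x}^\sharp - \vct{x}_s} \le C'\enorm{\tilde{\vct{e}}}$; combined with the reduction above this is the theorem. The main obstacle is the identification step: one must show that greedily selecting the $2s$ largest proxy coordinates captures almost all the energy of the (at most $2s$-sparse) residual, and it is there that both the noise $\tilde{\vct{e}}$ and the coefficient multiplying $\delta$ have to be kept small enough that the composed constant $\rho$ stays below $1$, which is why the hypothesis on $\delta_{4s}$ is as stringent as $0.1$.
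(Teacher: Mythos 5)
This theorem is imported verbatim from \cite{NT08:Cosamp} and is not proved in the present paper, so the comparison is with the original Needell--Tropp argument: your sketch reconstructs exactly that proof --- the reduction to the sparse case by absorbing $A(\vct{x}-\vct{x}_s)$ into the noise via the blockwise RIP bound (Proposition 3.5 there), the four-stage iteration invariant (identification, support merger, least-squares estimation, pruning), and the geometric decay with $\rho\le \tfrac12$ under $\delta_{4s}\le 0.1$. The plan is correct and essentially identical to the cited source; the only steps left to fill in are the energy-comparison computation in the identification lemma and the constant bookkeeping, both of which go through as in \cite{NT08:Cosamp}.
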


\section{Mixed Operators}

Applying the theories of compressed sensing to real-world problems
raises the following question: what happens when the operator used
to \emph{encode} the signal is \emph{different} from the operator
used to \emph{decode} the measurements? In many of the applications
mentioned in Section~1
the sensing, or measurement, matrix $\bs{A}$ actually represents a
system which the signal passes through. In other scenarios $\bs{A}$
represents some other physical phenomenon. For example, in screening
for genetic disorders~\cite{EGBHM10:Genotyping}, the standard deviation of
error in the sensing matrix is about $3\%$. This error is due to
human handling when pipetting the DNA
samples~\cite{E10:Genotyping_PersonalCommunication}. Whatever the
setting may be, it is often the case that the true nature of this
system is not known exactly. When this happens the system behavior
is (perhaps unknowingly) approximated, or assumed to be represented,
by a different matrix~$\bs{\Phi}$.

It is clear that the encoding and decoding operators, $\bs{A}$ and
$\bs{\Phi}$, cannot be too different, but until recently there has
been no analysis of the effect this difference has on reconstruction
error.  In particular, the perturbations in the sensing matrices create a multiplicative noise term of the form $(A-\Phi)x$.  Herman and Strohmer first showed in~\cite{HS10:General} that
a noisy measurement matrix can be successfully used to recover a
signal using $\ell_1$-minimization.
The natural question is whether this extends to the case of greedy
algorithms.
In this work, we consider the case of mixed operators in CoSaMP, and
our results naturally apply to other greedy algorithms.

In this analysis we will require examination of submatrices of
certain matrices. To that end, we define
${\normtwo{\bs{A}}^{\gr{s}}}$ to be the largest spectral norm over
all $s$-column submatrices of
~$A$. Let
\begin{equation}\label{pert_constants}
\eps^{\gr{s}}_{\bs{A}} \defby \frac{\normtwo{\bs{A-\Phi}}^{\gr{s}}}
{\normtwo{\bs{A}}^{\gr{s}}} \quad \text{ and } \quad
\kappa^{\gr{s}}_{\bs{A}} \defby
\frac{\sqrt{1+\delta_s}}{\sqrt{1-\delta_{s}}}.
\end{equation}
The first quantity is the relative perturbation of $s$-column
submatrices of $A$ \wrt\ to the spectral norm, and the second one
bounds ratio of the extremal singular values of all $s$-column
submatrices of $A$ (see~\cite{HS10:General} for more details). We
also need a measure of how ``close'' a signal $x$ is to a sparse
signal, and therefore define
\begin{equation}\label{alphabeta}
\alpha_s \defby \frac{\enorm{x - x_s}}{\enorm{x_s}} \quad \text{ and
} \quad \beta_s \defby \frac{\pnorm{1}{x-x_s}}{\sqrt{s}\enorm{x_s}}.
\end{equation}

\subsection{Mixed Operators in $\ell_1$-minimization}

The work in~\cite{HS10:General} extended the previous results in
$\ell_1$-minimization by generalizing the error term~$\epsilon$
which only accounted for additive noise. The new framework considers
a \emph{total noise term~}$\eps_{\bs{A},s,\bs{b}}$ which allows for
both multiplicative and additive noise. Theorem~\ref{thm:HSthm}
below shows that the reconstruction error using
$\ell_1$-minimization is limited by this noise level. With regard to
noise in operator~$A$ we see that the stability of the solution is a
linear function of relative
perturbations~$\eps_{\bs{A}},\eps_{\bs{A}}^{\gr{s}}$.

\begin{theorem}[Adapted from~\cite{HS10:General}, Thm.~2]\label{thm:HSthm}
Let~$x$ be an arbitrary signal with measurements $b = Ax$, corrupted
with noise to form $y = Ax + e$.
Assume the RIC for matrix $\bs{A}$ satisfies
\begin{equation} \label{cond:Main_thm_constraint_1}
\delta_{2s} \;<\; \frac{\sqrt{2}}
{\GR{1+\eps_{\bs{A}}^{\gr{2s}}}^{2}} \,-\, 1
\end{equation} and that general signal $\bs{x}$ satisfies
\begin{equation} \label{cond:Main_thm_constraint_2}
\alpha_s + \beta_s \;<\; \frac{1}{\kappa_{\bs{A}}^{\gr{s}}}.
\end{equation}
Set the total noise parameter
\begin{equation} \label{eq:BP_absolute_error_constraint}
\eps_{\bs{A},s,\bs{b}} := \GRg{
\frac{\eps_{\bs{A}}^{\gr{s}}\kappa_{\bs{A}}^{\gr{s}} +\,
\eps_{\bs{A}}\gamma_{\bs{A}}\alpha_s}
{1-\kappa_{\bs{A}}^{\gr{s}}\!\Gr{\alpha_s + \beta_s}} \,+\,
\eps_{\bs{b}}} \normtwo{\bs{b}}
\end{equation}
where the relative perturbations $\eps_{\bs{A}} =
\frac{\normtwo{\bs{A-\Phi}}}{\normtwo{\bs{A}}}$, $\eps_{\bs{b}}  =
\frac{\normtwo{\bs{e}}}{\normtwo{\bs{b}}}$, and $\gamma_{\bs{A}} =
\frac{\normtwo{\bs{A}}}{\sqrt{1-\delta_{s}}}$. Then the solution $\bs{z^\star}$ to the BP
problem~(\ref{L1}) with $\epsilon$ set to $\eps_{\bs{A},s,\bs{b}}$, and using the decoding matrix $\bs{\Phi}$ (instead of
$\bs{A}$) obeys
\begin{equation} \label{eq:Pert_error_soln_CS_l1}
{\normtwo{\bs{z^\star} - \bs{x}}} \;\le\;
\frac{C_0}{\sqrt{s}}\:\!\normone{\bs{x}-\bs{x}_s} \,+\,
C_1\:\!\eps_{\bs{A},s,\bs{b}} \\
\end{equation}
for some well-behaved constants $C_0, C_1$.
\end{theorem}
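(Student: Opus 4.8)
The plan is to reduce the theorem to Cand\`es's perturbation-free $\ell_1$ guarantee from~\cite{Can08:Restricted-Isometry}, applied to the \emph{decoding} matrix $\bs{\Phi}$ rather than to $\bs{A}$. Two facts must be established: that $\bs{\Phi}$ still enjoys the RIP with a constant below $\sqrt{2}-1$, and that the true signal $\bs{x}$ remains feasible for the BP program~\eqref{L1} built from $\bs{\Phi}$ once the radius is enlarged to $\eps_{\bs{A},s,\bs{b}}$. Granting both, Cand\`es's theorem immediately yields $\normtwo{\bs{z^\star}-\bs{x}}\le C_0\normone{\bs{x}-\bs{x}_s}/\sqrt{s}+C_1\,\eps_{\bs{A},s,\bs{b}}$, where $C_0,C_1$ are the constants of~\cite{Can08:Restricted-Isometry}; since these depend only on the RIC of $\bs{\Phi}$, they are ``well-behaved'' under~\eqref{cond:Main_thm_constraint_1}.

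\emph{RIP transfer.} For $s$-sparse $v$, $\normtwo{\bs{\Phi}v}\le\normtwo{\bs{A}v}+\normtwo{\bs{A-\Phi}}^{\gr{s}}\normtwo{v}\le\sqrt{1+\delta_s}\,\normtwo{v}+\eps^{\gr{s}}_{\bs{A}}\normtwo{\bs{A}}^{\gr{s}}\normtwo{v}$, and since $\normtwo{\bs{A}}^{\gr{s}}\le\sqrt{1+\delta_s}$, squaring gives the upper RIP bound $\normtwo{\bs{\Phi}v}^2\le(1+\delta_s)\GR{1+\eps^{\gr{s}}_{\bs{A}}}^{2}\normtwo{v}^2$; a symmetric estimate handles the lower bound. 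Hence the RIC $\delta'_{2s}$ of $\bs{\Phi}$ satisfies $1+\delta'_{2s}\le(1+\delta_{2s})\GR{1+\eps^{\gr{2s}}_{\bs{A}}}^{2}$, and~\eqref{cond:Main_thm_constraint_1} is precisely the hypothesis that forces the right-hand side below $\sqrt{2}$, i.e.\ $\delta'_{2s}<\sqrt{2}-1$, so Cand\`es's theorem applies to $\bs{\Phi}$.

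\emph{Feasibility.} Since $\bs{y}=\bs{Ax}+\bs{e}$, we have $\bs{\Phi x}-\bs{y}=(\bs{\Phi}-\bs{A})\bs{x}-\bs{e}$, so it suffices to show $\normtwo{(\bs{A}-\bs{\Phi})\bs{x}}+\normtwo{\bs{e}}\le\eps_{\bs{A},s,\bs{b}}$. With $\normtwo{\bs{e}}=\eps_{\bs{b}}\normtwo{\bs{b}}$, the task is to bound $\normtwo{(\bs{A}-\bs{\Phi})\bs{x}}$ by the fraction in~\eqref{eq:BP_absolute_error_constraint} times $\normtwo{\bs{b}}$. Split $\bs{x}=\bs{x}_s+(\bs{x}-\bs{x}_s)$: the sparse part contributes $\le\eps^{\gr{s}}_{\bs{A}}\sqrt{1+\delta_s}\,\normtwo{\bs{x}_s}$ via the $s$-column submatrix norm, and the tail contributes $\le\eps_{\bs{A}}\normtwo{\bs{A}}\,\normtwo{\bs{x}-\bs{x}_s}=\eps_{\bs{A}}\normtwo{\bs{A}}\alpha_s\normtwo{\bs{x}_s}$ via the full operator norm, giving $\normtwo{(\bs{A}-\bs{\Phi})\bs{x}}\le\bigl(\eps^{\gr{s}}_{\bs{A}}\sqrt{1+\delta_s}+\eps_{\bs{A}}\normtwo{\bs{A}}\alpha_s\bigr)\normtwo{\bs{x}_s}$. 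To convert $\normtwo{\bs{x}_s}$ into $\normtwo{\bs{b}}$, lower-bound $\normtwo{\bs{b}}=\normtwo{\bs{Ax}}\ge\normtwo{\bs{A}\bs{x}_s}-\normtwo{\bs{A}(\bs{x}-\bs{x}_s)}$ using $\normtwo{\bs{A}\bs{x}_s}\ge\sqrt{1-\delta_s}\,\normtwo{\bs{x}_s}$ and the standard sorted-block estimate $\normtwo{\bs{A}(\bs{x}-\bs{x}_s)}\le\sqrt{1+\delta_s}\bigl(\normtwo{\bs{x}-\bs{x}_s}+\normone{\bs{x}-\bs{x}_s}/\sqrt{s}\bigr)=\sqrt{1+\delta_s}\,(\alpha_s+\beta_s)\normtwo{\bs{x}_s}$; this yields $\normtwo{\bs{b}}\ge\sqrt{1-\delta_s}\,\bigl(1-\kappa^{\gr{s}}_{\bs{A}}(\alpha_s+\beta_s)\bigr)\normtwo{\bs{x}_s}$, where~\eqref{cond:Main_thm_constraint_2} keeps the parenthetical factor positive. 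Substituting and simplifying via $\sqrt{1+\delta_s}/\sqrt{1-\delta_s}=\kappa^{\gr{s}}_{\bs{A}}$, $\normtwo{\bs{A}}^{\gr{s}}\le\sqrt{1+\delta_s}$, and $\normtwo{\bs{A}}/\sqrt{1-\delta_s}=\gamma_{\bs{A}}$ reproduces exactly the fraction in~\eqref{eq:BP_absolute_error_constraint}; hence $\normtwo{\bs{\Phi x}-\bs{y}}\le\eps_{\bs{A},s,\bs{b}}$, $\bs{x}$ is feasible, and~\eqref{eq:Pert_error_soln_CS_l1} follows from the RIP transfer step.

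The proof is essentially careful bookkeeping rather than a deep obstacle; the delicate points are the estimate for the non-sparse tail $\normtwo{\bs{A}(\bs{x}-\bs{x}_s)}$, which must be handled through the sorted-block decomposition (this is what produces the $\beta_s$ term and distinguishes the submatrix norm used for $\bs{x}_s$ from the full operator norm used for the tail), and the need to check that conditions~\eqref{cond:Main_thm_constraint_1} and~\eqref{cond:Main_thm_constraint_2} are each invoked exactly where a strict inequality is required, so that $\delta'_{2s}<\sqrt{2}-1$ (for both the upper and lower RIP bounds of $\bs{\Phi}$), the denominator $1-\kappa^{\gr{s}}_{\bs{A}}(\alpha_s+\beta_s)$ is positive, and the constants $C_0,C_1$ carry over unchanged from~\cite{Can08:Restricted-Isometry}.
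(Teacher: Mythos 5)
Your proof is correct and follows essentially the same route as the cited source \cite{HS10:General} (the paper itself does not reprove Theorem~\ref{thm:HSthm}, but your RIP-transfer step is exactly its Lemma~\ref{thm:RIP_perturbed}): reduce to Cand\`es's noise-aware $\ell_1$ guarantee applied to $\bs{\Phi}$ by transferring the RIP via~\eqref{cond:Main_thm_constraint_1} and verifying feasibility of $\bs{x}$ at radius $\eps_{\bs{A},s,\bs{b}}$, with the $\kappa^{\gr{s}}_{\bs{A}}$, $\gamma_{\bs{A}}$, $\alpha_s$, $\beta_s$ bookkeeping reproducing the stated noise parameter exactly. No gaps.
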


\subsection{Mixed Operators in CoSaMP}
We now turn to the case of mixed operators in greedy algorithms, and
in particular CoSaMP.  We will see that a result analogous to that
of $\ell_1$-minimization can be obtained in this case as well.
Similar to condition~(\ref{cond:Main_thm_constraint_2}) above, we
will again need for the signal to be well approximated by a sparse
signal. To that end,
we require that
\begin{equation}\label{thecond}
\alpha_s + \beta_s \,\leq\, \frac{1}{2\,\kappa_{\bs{A}}^{\gr{s}}}
\end{equation}
where $\alpha_s$ and $\beta_s$ are defined in~\eqref{alphabeta}.
Theorem~\ref{thm:thmarb} below shows that under this assumption, the
reconstruction error in CoSaMP is again limited by the tail of the
signal and the observation noise.

\begin{theorem}\label{thm:thmarb}
Let $A$ be a measurement matrix with RIC 
\begin{equation}\label{the2ndcond}
\delta_{4s} \leq \frac{1.1}{(1+\eps^{\gr{4s}}_{\bs{A}})^2}-1.
\end{equation}
Let~$x$ be an arbitrary signal with measurements $b = Ax$, corrupted
with noise to form $y = Ax + e$. Let $x^\sharp$ be the
reconstruction from CoSaMP using decoding matrix $\Phi$ (instead of
$\bs{A}$) on measurements $y$. Then if~\eqref{thecond} is satisfied,
the estimation satisfies
$$
\|x^\sharp - x\|_2 \leq C \cdot \left( \enorm{x-x_s} +
\frac{1}{\sqrt{s}}\pnorm{1}{x - x_s} + (\e\alpha_s +
\e^{(s)})\enorm{b}  + \enorm{e}  \right)
$$
where $\e = \|A-\Phi\|_2$ and $\e^{(s)} = \enorm{A - \Phi}^{(s)}$.
\end{theorem}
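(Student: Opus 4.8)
The plan is to reduce the mixed-operator CoSaMP analysis to the standard CoSaMP guarantee (Theorem~\ref{thm:cosamp}) by treating the discrepancy $A-\Phi$ as an additional noise contribution, exactly in the spirit of the $\ell_1$ argument behind Theorem~\ref{thm:HSthm}. Concretely, CoSaMP is run with the decoding matrix $\Phi$ on the data $y = Ax + e$. I would rewrite this as $y = \Phi x + \tilde e$, where $\tilde e \defby (A-\Phi)x + e$ is an effective additive noise \emph{relative to the decoding matrix} $\Phi$. Then Theorem~\ref{thm:cosamp} applied to $(\Phi, y)$ — provided $\Phi$ has RIC $\delta_{4s}(\Phi) \le 0.1$ — immediately yields
$$
\enorm{x^\sharp - x} \le C\cdot\Big(\enorm{x-x_s} + \tfrac{1}{\sqrt s}\pnorm{1}{x-x_s} + \enorm{\tilde e}\Big),
$$
so the whole theorem comes down to two things: (i) controlling $\delta_{4s}(\Phi)$ in terms of $\delta_{4s}(A)$ and the perturbation level $\eps^{\gr{4s}}_{\bs A}$, and (ii) bounding $\enorm{\tilde e}$ by $(\e\alpha_s + \e^{(s)})\enorm b + \enorm e$ up to a constant.

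For step (i), I would use the submatrix/spectral-norm bookkeeping already set up before the theorem. Writing any $4s$-sparse unit vector $v$ supported on a set $|T|\le 4s$, we have $\enorm{\Phi_T v} \le \enorm{A_T v} + \enorm{(A-\Phi)_T v} \le (\sqrt{1+\delta_{4s}} + \normtwo{A-\Phi}^{\gr{4s}})\enorm v$, and similarly from below, so the RIC of $\Phi$ satisfies $1+\delta_{4s}(\Phi) \le (\sqrt{1+\delta_{4s}(A)} + \normtwo{A-\Phi}^{\gr{4s}})^2 = (1+\eps^{\gr{4s}}_{\bs A})^2(1+\delta_{4s}(A))\cdot(\text{correction})$ — here I would be slightly careful, factoring $\normtwo{A}^{\gr{4s}} \le \sqrt{1+\delta_{4s}}$ into the definition of $\eps^{\gr{4s}}_{\bs A}$ so that the clean bound $1+\delta_{4s}(\Phi) \le (1+\eps^{\gr{4s}}_{\bs A})^2(1+\delta_{4s}(A))$ emerges, exactly as in~\cite{HS10:General}. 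Hypothesis~\eqref{the2ndcond}, $\delta_{4s}(A) \le 1.1/(1+\eps^{\gr{4s}}_{\bs A})^2 - 1$, is precisely engineered so that this gives $1+\delta_{4s}(\Phi) \le 1.1$, i.e.\ $\delta_{4s}(\Phi) \le 0.1$, the hypothesis of Theorem~\ref{thm:cosamp}. (The lower RIP bound must also be checked to ensure $\delta_{4s}(\Phi) \ge 0$ behaves, but the upper bound is the binding one.)

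For step (ii), I would split $\enorm{\tilde e} \le \enorm{(A-\Phi)x} + \enorm e$ and then decompose $x = x_s + (x-x_s)$. The tail part gives $\enorm{(A-\Phi)(x-x_s)} \le \normtwo{A-\Phi}\,\enorm{x-x_s} = \e\,\alpha_s\enorm{x_s} \le \e\,\alpha_s\enorm{x}$; since $\enorm{x}$ and $\enorm{b}$ are comparable via the RIP on $A$, this is absorbed into $\e\alpha_s\enorm b$ (up to a constant depending on $\delta_{s}(A)$, hence on $\kappa^{\gr s}_{\bs A}$ — this is where condition~\eqref{thecond} enters, guaranteeing $\alpha_s,\beta_s$ are small enough that these ratios stay well-behaved). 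The sparse part gives $\enorm{(A-\Phi)x_s} \le \normtwo{A-\Phi}^{\gr s}\enorm{x_s} = \e^{(s)}\enorm{x_s}$, again comparable to $\e^{(s)}\enorm b$. Collecting, $\enorm{\tilde e} \lesssim (\e\alpha_s + \e^{(s)})\enorm b + \enorm e$, and feeding this into the displayed CoSaMP bound produces exactly the claimed inequality after renaming the constant.

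The main obstacle I anticipate is not any single estimate but the careful normalization needed to make the perturbation of the restricted isometry constant come out as the clean expression in~\eqref{the2ndcond} — in particular, tracking the distinction between $\normtwo{A}^{\gr s}$ and $\sqrt{1+\delta_s}$ in the definition of $\eps^{\gr s}_{\bs A}$, and verifying that condition~\eqref{thecond} (with its factor of $2$, rather than the factor appearing in~\eqref{cond:Main_thm_constraint_2}) is exactly what is required to keep the constant $C$ ``well-behaved'' when passing through the least-squares steps of CoSaMP. A secondary point is that CoSaMP's internal guarantees (the iteration-invariant in~\cite{NT08:Cosamp}) are stated for the matrix actually used in the algorithm, so I must be scrupulous that every appearance of the measurement matrix inside the CoSaMP loop is $\Phi$, while the ``ground truth'' relation $b=Ax$ uses $A$; the bridge between them is entirely carried by the single substitution $y=\Phi x + \tilde e$ made at the outset.
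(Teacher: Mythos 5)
Your proposal follows essentially the same route as the paper's proof: invoke the Herman--Strohmer perturbation lemma to get $\hat\delta_{4s}\le 0.1$ for $\Phi$, apply the standard CoSaMP guarantee to $y=\Phi x+(A-\Phi)x+e$, and bound $\enorm{(A-\Phi)x}$ by splitting $x=x_s+(x-x_s)$ and using condition~\eqref{thecond} together with the RIP to compare $\enorm{x_s}$ with $\enorm{b}$. The argument is correct and matches the paper's in all essentials.
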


Applying Theorem~\ref{thm:thmarb} to the sparse case, we immediately
have the following corollary.

\begin{corollary}\label{mainthm}
Let $A$ be a measurement matrix with RIC 
$\delta_{4s} \leq \frac{1.1}{(1+\eps^{\gr{4s}}_{\bs{A}})^2}-1$.
Let~$x$ be an $s$-sparse signal with noisy measurements $y = b + e =
Ax + e$. Let $x^\sharp$ be the reconstruction from CoSaMP using
decoding matrix $\Phi$ (instead of $\bs{A}$). Then the estimation
satisfies
$$
\|x^\sharp - x\|_2 \leq C \cdot \left( \e^{(s)}\|b\|_2 + \|e\|_2
\right)
$$
where $\e^{(s)} = \enorm{A - \Phi}^{(s)}$.
\end{corollary}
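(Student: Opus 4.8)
The plan is to obtain Corollary~\ref{mainthm} as an immediate specialization of Theorem~\ref{thm:thmarb} to the case $x = x_s$. Since $x$ is $s$-sparse, its best $s$-term approximation is itself, so $\enorm{x - x_s} = 0$ and $\pnorm{1}{x - x_s} = 0$; consequently the quantities $\alpha_s$ and $\beta_s$ from~\eqref{alphabeta} both vanish, and the total noise in the estimate degenerates to the genuinely additive and multiplicative parts.

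First I would verify that the hypotheses of Theorem~\ref{thm:thmarb} are met in this setting. The restricted isometry condition~\eqref{the2ndcond} on $\delta_{4s}$ is literally identical in the two statements, so there is nothing to check there. For the near-sparsity condition~\eqref{thecond}, note that monotonicity of the restricted isometry constants (every $s$-sparse vector is $4s$-sparse) gives $\delta_s \le \delta_{4s}$, and since $\eps^{(4s)}_{\bs{A}} \ge 0$ condition~\eqref{the2ndcond} forces $\delta_{4s} \le 0.1 < 1$; hence $\kappa_{\bs{A}}^{(s)} = \sqrt{1+\delta_s}/\sqrt{1-\delta_s}$ is finite and strictly positive, so $1/(2\kappa_{\bs{A}}^{(s)}) > 0 = \alpha_s + \beta_s$ and~\eqref{thecond} holds automatically. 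In particular, no additional assumption on the signal or on the relative perturbation $\eps^{(s)}_{\bs{A}}$ is needed in the sparse case.

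Then I would invoke Theorem~\ref{thm:thmarb} and substitute $\enorm{x - x_s} = \pnorm{1}{x - x_s} = 0$ and $\alpha_s = 0$ into its conclusion. The right-hand side collapses from $C\bigl(\enorm{x - x_s} + \frac{1}{\sqrt{s}}\pnorm{1}{x - x_s} + (\e\alpha_s + \e^{(s)})\enorm{b} + \enorm{e}\bigr)$ to $C\bigl(\e^{(s)}\enorm{b} + \enorm{e}\bigr)$ with the same absolute constant $C$ and with $\e^{(s)} = \enorm{A - \Phi}^{(s)}$, which is exactly the claimed estimate.

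Since the argument is essentially a substitution, there is no real obstacle; the only point requiring a moment's care is confirming that the RIC hypothesis guarantees $\delta_s < 1$, so that $\kappa_{\bs{A}}^{(s)}$ and hence the right-hand side of~\eqref{thecond} are well-defined and positive — which, as noted, follows from the nesting $\delta_s \le \delta_{4s} \le 0.1$. (Alternatively, one could re-run the CoSaMP perturbation analysis from scratch with $x$ sparse, but invoking Theorem~\ref{thm:thmarb} directly is cleaner and avoids duplicating work.)
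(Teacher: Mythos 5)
Your proposal is correct and matches the paper exactly: the paper derives Corollary~\ref{mainthm} by "applying Theorem~\ref{thm:thmarb} to the sparse case," which is precisely your substitution $x = x_s$, making $\alpha_s = \beta_s = 0$ so that condition~\eqref{thecond} holds trivially and the error bound collapses to $C(\e^{(s)}\|b\|_2 + \|e\|_2)$. Your additional check that $\delta_s \le \delta_{4s} \le 0.1$ keeps $\kappa_{\bs{A}}^{(s)}$ well-defined is a reasonable bit of extra care the paper leaves implicit.
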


We now analyze the case of mixed operators and prove our main
result, Theorem~\ref{thm:thmarb}. We will first utilize a result
from~\cite{HS10:General} which states that matrices which are
``close'' to each other also have similar RICs.

\begin{lemma}[RIP for $\Phi$~\cite{HS10:General}] \label{thm:RIP_perturbed}
For any $s=1,2,\ldots$, assume and fix the RIC~$\delta_s$ associated
with $\bs{A}$, and the relative
perturbation~$\eps^{\gr{s}}_{\bs{A}}$ associated with $\bs{A-\Phi}$
as defined in~(\ref{pert_constants}). Then the RIC constant
$\hat{\delta}_s$ for matrix $\bs{\Phi}$ satisfies
\begin{equation} \label{eq:pert_RIC_bound}
\hat{\delta}_s \,\leq\, \Gr{1+\delta_s}\GR{1+\eps^{\gr{s}}_{\bs{A}}}^2
- 1. \end{equation}
\end{lemma}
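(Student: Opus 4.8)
The plan is to reduce the claim to a single $s$-column submatrix and then balance the triangle inequality for the spectral norm against the definition of $\eps^{\gr{s}}_{\bs{A}}$. Fix an arbitrary $s$-sparse vector $x$, let $T=\supp(x)$ (so $|T|\le s$), and note $\Phi x=\Phi_T x_T$. Write $\Phi_T=A_T+(A-\Phi)_T$. The RIP for $A$ gives $\normtwo{A_T}\le\sqrt{1+\delta_s}$ for every such $T$, hence $\normtwo{A}^{\gr{s}}\le\sqrt{1+\delta_s}$; combining this with the definition in~\eqref{pert_constants} yields $\normtwo{(A-\Phi)_T}\le\normtwo{A-\Phi}^{\gr{s}}=\eps^{\gr{s}}_{\bs{A}}\normtwo{A}^{\gr{s}}\le\eps^{\gr{s}}_{\bs{A}}\sqrt{1+\delta_s}$.

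For the upper RIP inequality, the triangle inequality gives $\normtwo{\Phi x}=\normtwo{\Phi_T x_T}\le(\normtwo{A_T}+\normtwo{(A-\Phi)_T})\normtwo{x}\le\sqrt{1+\delta_s}\,(1+\eps^{\gr{s}}_{\bs{A}})\normtwo{x}$, and squaring gives $\normtwo{\Phi x}^2\le(1+\delta_s)(1+\eps^{\gr{s}}_{\bs{A}})^2\normtwo{x}^2$, i.e. the upper bound with constant $\hat\delta_s\le(1+\delta_s)(1+\eps^{\gr{s}}_{\bs{A}})^2-1$. For the lower RIP inequality, the reverse triangle inequality gives $\normtwo{\Phi x}\ge\normtwo{A_T x_T}-\normtwo{(A-\Phi)_T x_T}\ge(\sqrt{1-\delta_s}-\eps^{\gr{s}}_{\bs{A}}\sqrt{1+\delta_s})\normtwo{x}$. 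Here I would split into two cases: if $\sqrt{1-\delta_s}-\eps^{\gr{s}}_{\bs{A}}\sqrt{1+\delta_s}\ge 0$, square and expand, and compare against $1-[(1+\delta_s)(1+\eps^{\gr{s}}_{\bs{A}})^2-1]=(1-\delta_s)-2\eps^{\gr{s}}_{\bs{A}}(1+\delta_s)-(\eps^{\gr{s}}_{\bs{A}})^2(1+\delta_s)$, using only $\sqrt{(1-\delta_s)(1+\delta_s)}\le 1+\delta_s$ to see the squared lower bound dominates it; if instead $\sqrt{1-\delta_s}-\eps^{\gr{s}}_{\bs{A}}\sqrt{1+\delta_s}<0$, then $1-\delta_s<(\eps^{\gr{s}}_{\bs{A}})^2(1+\delta_s)$, which forces $(1+\delta_s)(1+\eps^{\gr{s}}_{\bs{A}})^2>2$, so the claimed lower constant $1-\hat\delta_s$ is already negative and the lower bound $\normtwo{\Phi x}^2\ge(1-\hat\delta_s)\normtwo{x}^2$ is vacuous. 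In both cases the two RIP inequalities hold with $\hat\delta_s\le(1+\delta_s)(1+\eps^{\gr{s}}_{\bs{A}})^2-1$, which is~\eqref{eq:pert_RIC_bound}.

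The one place requiring a little care is the lower bound: the reverse triangle inequality can output a negative quantity, which must not be squared blindly, so the main (very mild) obstacle is checking that precisely in that regime the target lower constant has itself turned negative. Everything else is the routine absorption of the cross term via $\sqrt{(1-\delta_s)(1+\delta_s)}\le 1+\delta_s$ after squaring.
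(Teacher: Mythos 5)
Your proof is correct and follows essentially the same route as the argument in the cited reference~\cite{HS10:General}: decompose $\Phi_T = A_T + (\Phi - A)_T$, bound $\normtwo{A}^{\gr{s}} \le \sqrt{1+\delta_s}$ via the RIP, and apply the triangle and reverse triangle inequalities, checking that the upper deviation $(1+\delta_s)(1+\eps^{\gr{s}}_{\bs{A}})^2 - 1$ dominates the lower one. Your explicit handling of the case where the reverse triangle inequality yields a negative quantity is a point the source glosses over, but it does not change the approach.
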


We now prove our main result, Theorem~\ref{thm:thmarb}
\begin{proof}[Proof of Theorem~\ref{thm:thmarb}]
Lemma~\ref{thm:RIP_perturbed} applied to the case where $\delta_{4s}
\leq \frac{1.1}{(1+\eps^{\gr{4s}}_{\bs{A}})^2}-1$ implies that the
matrix $\Phi$ has an RIC 
that satisfies $\hat{\delta}_{4s} \leq 0.1$. We
can then apply Theorem~\ref{thm:cosamp} with measurements $y = \Phi
x + (A - \Phi)x + e$. This implies that the reconstruction
$x^\sharp$ satisfies
\begin{equation}\label{start}
\enorm{x^\sharp - \vct{x}} \leq {C} \cdot  \left(\enorm{x - x_s} +
\frac{1}{\sqrt{s}}\pnorm{1}{x-x_s} + \enorm{ (A - \Phi)x} +
\enorm{e}\right).
\end{equation}

As seen in Proposition 3.5 of~\cite{NT08:Cosamp}, the RIP 
implies that for an arbitrary signal $x$,
$$
\enorm{Ax} \leq \sqrt{1 + \delta_s}
    \left( \enorm{ \vct{x} } + \frac{1}{\sqrt{s}}
        \pnorm{1}{\vct{x}} \right).
$$
As shown in~\cite{HS10:General}, this and the RIP 
imply that
$$
\enorm{Ax} \geq \sqrt{1 - \delta_s}\enorm{x_s} - \sqrt{1 +
\delta_s}\big(\enorm{x-x_s} +
\frac{1}{\sqrt{s}}\pnorm{1}{x-x_s}\big).
$$
We then have that
$$
\enorm{(A-\Phi)x} \leq \frac{\enorm{A-\Phi}\enorm{x-x_s} +
\enorm{A-\Phi}^{(s)}\enorm{x_s}}{\sqrt{1 - \delta_s}\enorm{x_s} -
\sqrt{1 + \delta_s}(\enorm{x-x_s} +
\frac{1}{\sqrt{s}}\pnorm{1}{x-x_s})}\enorm{Ax}.
$$
Condition~\eqref{thecond} then gives us
\begin{align*} \enorm{(A-\Phi)x} &\leq \frac{\enorm{A-\Phi}
\enorm{x-x_s} + \enorm{A-\Phi}^{(s)}\enorm{x_s}}
{\frac{1}{2}\sqrt{1-\delta_s}\enorm{x_s}}\enorm{Ax}\\
&= \left(\frac{2\enorm{A-\Phi}}{\sqrt{1-\delta_s}}
+ \frac{2\enorm{A-\Phi}^{(s)}}{\sqrt{1-\delta_s}} \right)\enorm{Ax}.
\end{align*}
Applying the inequality $\delta_s \leq \delta_{4s} \leq 0.1$ yields
$$
\enorm{(A-\Phi)x} \leq C'\big(\enorm{A-\Phi}\alpha_s +
\enorm{A-\Phi}^{(s)}\big)\enorm{Ax}.
$$
Combined with~\eqref{start}, this completes the claim.
\end{proof}

\section{Discussion}


One should of course make sure that the requirements imposed by
Theorems~\ref{thm:HSthm} and~\ref{thm:thmarb} are reasonable and
make sense. For instance, in Theorem~\ref{thm:HSthm}, to ensure that
the RIC $\delta_{2s} \ge 0$ we can set the left-hand side of
condition~(\ref{cond:Main_thm_constraint_1}) to zero. Rearranging, this 
requires that $\eps_{\bs{A}}^{(2s)} < \sqrt[4]{2}-1$, which
addresses the question ``how \emph{dissimilar} can $A$ and $\Phi$
be?" Loosely phrased, the answer is that the spectral norm of
$2s$-column submatrices of $\Phi$ cannot deviate by more than about
$19\%$ of spectral norm of $2s$-column submatrices of $A$. The
corresponding condition~(\ref{the2ndcond}) in
Theorem~\ref{thm:thmarb} requires that $\eps_{\bs{A}}^{(4s)} \le
\sqrt{1.1}-1$, which translates to an approximate $5\%$
dissimilarity between $A$ and $\Phi$. The second condition,
(\ref{cond:Main_thm_constraint_2}), in Theorem~\ref{thm:HSthm} is
discussed in~\cite{HS10:General}, and essentially requires that the
signal be well approximated by a sparse signal. This is, of course,
a standard assumption in compressed sensing.  The same argument
holds for condition~(\ref{thecond}) in Theorem~\ref{thm:thmarb}.

In conclusion, real-world applications often utilize different
operators (perhaps unknowingly) to encode and decode a signal.  The perturbation 
of the sensing matrix creates \textit{multiplicative} noise in the system.  This 
type of noise is \textit{fundamentally} different than simple additive noise.  For example, to overcome 
a poor signal-to-noise ratio (SNR)
due to additive noise, one would typically increase the strength of the signal.  However, 
if the noise is multiplicative this will not improve the situation, and in fact will actually cause the error to grow.  
Thus the impact on reconstruction from the error in the sensing matrices needs to be analyzed.  Our
Theorems~\ref{thm:HSthm} and~\ref{thm:thmarb} do just that.  They show the effect
of using mixed operators to recover a signal in compressed sensing:
the stability of the recovered signal is a linear function of the
operator perturbations defined above.  This work confirms that this
is the case both for $\ell_1$-minimization and CoSaMP. These results
can easily be extended to other greedy algorithms as well.

 \subsection*{Acknowledgment}
We would like to thank Thomas Strohmer for many thoughtful discussions and his invaluable guidance.

\bibliography{v4}
\end{document}